\documentclass[11pt]{amsart}
\usepackage[dvipdfmx]{graphicx, color}
\usepackage{amssymb, amsmath,amsthm}
\usepackage{epstopdf}
\usepackage{comment}
\DeclareGraphicsRule{.tif}{png}{.png}{`convert #1 `dirname #1`/`basename #1 .tif`.png}
 
\textwidth = 5.5 in
\textheight = 8.5 in
\oddsidemargin = 0.45 in
\evensidemargin = 0.45 in
\parindent = 0.2in

\newcommand{\nc}{\newcommand}
\nc{\dmo}{\DeclareMathOperator}
\nc{\nt}{\newtheorem}

\nt{theorem}{Theorem}
\nt{lemma}{Lemma}

\newtheorem{thm}{{\bf Theorem}}
\newtheorem{lem}[thm]{{\bf Lemma}}

\newtheorem{rem}[thm]{Remark}

\newtheorem{ques}[thm]{Question}

\numberwithin{equation}{section}

\begin{document}

\title[On hyperbolic surface bundles over $S^1$ as  branched double covers of $S^3$]
{On hyperbolic surface bundles over the circle as  branched double covers of the 
$3$-sphere}

\author[S. Hirose]{%
Susumu Hirose
}
\address{%
Department of Mathematics,  
Faculty of Science and Technology, 
Tokyo University of Science, 
Noda, Chiba, 278-8510, Japan}
\email{%
hirose\_susumu@ma.noda.tus.ac.jp
}

\author[E. Kin]{%
    Eiko Kin
}
\address{%
       Department of Mathematics, Graduate School of Science, Osaka University Toyonaka, Osaka 560-0043, JAPAN
}
\email{%
        kin@math.sci.osaka-u.ac.jp
}

\subjclass[2000]{%
	Primary 57M27, 37E30, Secondary 37B40
}

\keywords{%
	pseudo-Anosov, dilatation (stretch factor), $2$-fold branched cover of the $3$-sphere, 
	fibered $3$-manifold, Heegaard surface, mapping class groups, braid groups}

\date{
August 8, 2019
}

	
\begin{abstract} 
The branched virtual fibering theorem by Sakuma states that
 every closed orientable $3$-manifold with a Heegaard surface of genus $g$ 
 has a branched double cover  which is a genus $g$ surface bundle over the circle. 
 It is proved by Brooks that such a surface bundle can be chosen to be hyperbolic. 
We prove that 
 the minimal entropy over all hyperbolic,  genus $g$ surface bundles 
 as  branched double covers of the $3$-sphere 
 behaves like 1/$g$. 
 We also give an alternative construction of surface bundles over the circle in Sakuma's theorem 
 when 
closed $3$-manifolds are branched double covers of the $3$-sphere branched over links. 
A feature of surface bundles coming from our construction  is that 
the monodromies   can be read off 
the braids obtained from the links as the branched set. 
\end{abstract}
\maketitle

\section{Introduction}
\label{section_Introduction}

This paper concerns the {\it branched} virtual fibering theorem by Makoto Sakuma. 
To state his theorem 
let $\Sigma = \Sigma_{g,p}$ be an orientable, 
connected surface of genus $g$ with $p$ punctures possibly $p= 0$, 
and let us set $\Sigma_g= \Sigma_ {g,0}$. 
The mapping class group $\mathrm{Mod}(\Sigma)$  is 
the group of isotopy classes of orientation-preserving 
self-homeomorphisms on $\Sigma$ which preserve the punctures setwise. 
By Nielsen-Thurston classification, 
elements  in $\mathrm{Mod}(\Sigma)$ fall into three types: 
periodic, reducible, pseudo-Anosov \cite{Thurston88}. 
To each pseudo-Anosov element $\phi$, 
there is an associated dilatation (stretch factor) $\lambda(\phi)>1$~(see \cite{FM12} for example). 
We call the logarithm of the dilatation $\log(\lambda(\phi))$  the {\it entropy} of $\phi$. 

Choosing a representative 
$f : \Sigma \to \Sigma$ of $\phi$ 
we define the {\it mapping torus\/} $T_{\phi}$ by 
$$
T_{\phi} = \Sigma \times \mathbb{R} / \sim, 
$$
where $(x,t) \sim (f(x), t+1)$ for $x \in \Sigma$, $t \in \mathbb{R}$. 
We call $\Sigma$ the {\it fiber surface} of $T_{\phi}$.  
The $3$-manifold $T_{\phi}$ is a $\Sigma$-bundle over the circle with the monodromy $\phi$. 
By Thurston \cite{Thurston98} 
$T_{\phi}$ admits a hyperbolic structure of finite volume 
if and only if $\phi$ is pseudo-Anosov.

The following theorem  is due to Sakuma~\cite[Addendum~1]{Sakuma81}. 
See also \cite[Section~3]{Brooks85}.

\begin{thm}[Branched virtual fibering theorem]
\label{thm_Sakuma}
Let $M$ be a closed orientable $3$-manifold. 
Suppose that $M$ admits a genus $g$ Heegaard splitting. 
Then there is a $2$-fold branched cover $\widetilde{M}$ of $M$ 
which is a $\Sigma_g$-bundle over the circle. 
\end{thm}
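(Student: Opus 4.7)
Sketch. The plan is to lift the Heegaard decomposition of $M$ to a fibered structure on a $2$-fold branched cover. The key input is the classical fact that every genus $g$ handlebody $H_g$ is a $2$-fold branched cover of the $3$-ball $B^3$ along a trivial tangle of $g+1$ unknotted arcs, with covering involution $\iota_H\colon H_g\to H_g$ whose restriction to $\partial H_g=\Sigma_g$ is the standard hyperelliptic involution $\iota$ (with quotient $(S^2,2g+2 \text{ branch points})$).

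Given the Heegaard splitting $M=H_1\cup_{\phi}H_2$, I would construct $\widetilde{M}$ and the covering map $\pi\colon\widetilde{M}\to M$ by a doubling construction: take two copies of the Heegaard decomposition, say $M^+=H_1^+\cup_{\phi}H_2^+$ and $M^-=H_1^-\cup_{\phi}H_2^-$, and re-glue them to each other along collars of their Heegaard surfaces using the hyperelliptic involutions $\iota_{H_i}$ on each handlebody as matching data. The resulting closed $3$-manifold $\widetilde{M}$ admits a $\mathbb{Z}/2$-action exchanging the $\pm$ copies whose quotient recovers $M$; the fixed locus of this action is a link that projects to the branch set of $\pi$.

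The $\Sigma_g$-bundle structure on $\widetilde{M}$ emerges from the same assembly: cutting $\widetilde{M}$ along appropriate copies of $\Sigma_g$ exhibits it as two (or more) $\Sigma_g\times I$ pieces glued along $\Sigma_g$ boundaries, which is automatically a bundle over $S^1$. Equivalently, each such $\Sigma_g\times I$ piece can be viewed as a $2$-fold branched cover of one handlebody $H_i$ along a $1$-manifold in its interior (whose Euler characteristic vanishes by Riemann-Hurwitz), obtained by lifting the trivial tangle, and gluing two such pieces along lifts of $\phi$ produces a mapping torus $T_{\psi}$ for some $\psi\in\mathrm{Mod}(\Sigma_g)$ expressible in terms of $\phi$ and $\iota$.

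The main obstacle is verifying that the covering structures on $H_1$ and $H_2$ can be aligned across the Heegaard surface $\Sigma_g$. Because $\phi$ need not commute with $\iota$, the involutions $\iota_{H_i}$ do not glue directly to an involution of $M$, and there is a genuine ambiguity in how to lift $\phi$ to a gluing of the covers. The doubling construction bypasses this by providing two copies of everything and using the two lifts simultaneously; the consistency of the resulting branched cover and its bundle monodromy ultimately rest on the Birman-Hilden correspondence between the hyperelliptic mapping class group of $\Sigma_g$ (mapping classes commuting with $\iota$) and the spherical braid group $\mathrm{Mod}(S^2,2g+2)$, which is what encodes the hyperelliptic structure on both sides of the cover simultaneously.
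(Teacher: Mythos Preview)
Two things to flag. First, the paper does not actually prove this statement in full generality: it is quoted as Sakuma's theorem, and the paper only supplies an alternative proof in the special case where $M=M_{C(b)}$ is already a $2$-fold branched cover of $S^3$ over a link (this is Theorem~B). The paper's argument for that case is quite different from your sketch. It does not double $M$ or work handlebody-by-handlebody; instead it realises the desired bundle as the $\mathbb{Z}/2\mathbb{Z}+\mathbb{Z}/2\mathbb{Z}$ branched cover of $S^3$ over the link $C(b)\cup W$. One intermediate $\mathbb{Z}/2$ quotient is $M_{C(b)}$; the other is $S^2\times S^1$ (the double branched cover of $S^3$ over the trivial $2$-component link $W$). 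In $S^2\times S^1$ the preimage of $C(b)$ is the closure of the spherical braid $\widetilde{b}=\operatorname{skew}(b)\cdot b$, and the double branched cover of $S^2\times S^1$ along a closed $(2g+2)$-braid is visibly a $\Sigma_g$-bundle, with monodromy $\mathfrak t(\widetilde b)$ read off directly from the braid. So the paper trades the Heegaard/handlebody picture entirely for a braid picture, and gains an explicit monodromy formula in return.

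Second, your sketch aims at the general statement, which is more than the paper attempts, and is indeed in the spirit of Sakuma's original proof; but as written it has real gaps. The phrase ``take two copies $M^{\pm}$ and re-glue them along collars of their Heegaard surfaces'' is not a well-defined operation: the $M^{\pm}$ are closed, and if you mean to cut along $\Sigma_g$ and reassemble the four handlebodies you must say which boundary components are glued by which maps, since that is exactly where the monodromy $\psi$ comes from. More substantively, the hyperelliptic involution $\iota$ is the wrong involution for the step ``$\Sigma_g\times I$ is a $2$-fold branched cover of $H_g$'': for an involution $\tau$ on $\Sigma_g\times I$ swapping the two boundary components to have $1$-dimensional fixed set (hence a manifold quotient), its restriction to the middle fibre must be orientation-\emph{reversing} on $\Sigma_g$, whereas $\iota$ is orientation-preserving (its fixed set is $2g+2$ points, which would produce an orbifold). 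The role of $\iota$ in Sakuma's argument is to extend over each handlebody with quotient $B^3$; the passage to a $\Sigma_g$-bundle uses this in a different way than your sketch suggests (one builds the involution on the mapping torus $T_{\psi}$, not on $\Sigma_g\times I$ separately), and the precise formula for $\psi$ in terms of $\phi$ and $\iota$---together with the verification that the quotient is $M$---is the content you would still need to supply.
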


It is proved by Brooks~\cite{Brooks85} that  $\widetilde{M}$ in Theorem~\ref{thm_Sakuma} can  be chosen to be hyperbolic if  
$g \ge \max(2, g(M))$, 
where $g(M)$ is the Heegaard genus of $M$. 
See also \cite{Montesinos87} by Montesinos.

Let $D_g(M)$ be the subset of $\mathrm{Mod}(\Sigma_g)$ consisting of elements 
$\phi$ such that $T_{\phi}$ is homeomorphic to a $2$-fold branched cover of $M$ branched over some link. 
By Theorem~\ref{thm_Sakuma} we have $D_g(M) \ne \emptyset$. 
By Brooks together with the stabilization of Heegaard splittings, 
there is a pseudo-Anosov element in $D_g(M) $ for each $g \ge \max(2, g(M))$. 
The set of fibered $3$-manifolds 
$T_{\phi}$ over all $\phi \in D_g(M)$ 
possesses various properties inherited under branched covers of $M$. 
It is natural to ask about the dynamics of pseudo-Anosov elements in $D_g(M)$. 
We are interested in the set of entropies of pseudo-Anosov mapping classes.

We fix a surface $\Sigma$ and consider 
the set of entropies 
$$\{\log \lambda(\phi)\ |\ \phi \in \mathrm{Mod}(\Sigma)\  \mbox{is pseudo-Anosov}\}$$ 
which is a closed, discrete subset of ${\Bbb R}$ (\cite{AY81}). 
For any subset $G \subset \mathrm{Mod}(\Sigma)$ 
let $\delta(G)$ denote the minimum of dilatations $\lambda(\phi)$ over all pseudo-Anosov elements $\phi \in G$. 
Then $\delta(G) \ge \delta(\mathrm{Mod}(\Sigma))$. 
For real valued functions $f$ and $h$,  we write $f \asymp h$ 
 if there is a universal constant $c$  such that $h/c \le f \le c h$. 
It is proved by Penner~\cite{Penner91} that 
 $$\log \delta(\mathrm{Mod}(\Sigma_g)) \asymp \dfrac{1}{g}.$$ 
A question arises: 
what can we say about the asymptotic behavior of the minimal entropies 
$\log \delta(D_g(M))$'s  for each closed $3$-manifold $M$? 
In this paper we consider this question when $M$ is the $3$-sphere $S^3$. 
Our main theorem is the following. 

\begin{theorem}\label{thm_main}
We have $\log \delta(D_g(S^3)) \asymp \dfrac{1}{g}$. 
\end{theorem}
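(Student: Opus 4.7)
The asymptotic $\log\delta(D_g(S^3))\asymp 1/g$ splits into matching upper and lower bounds of order $1/g$. The lower bound comes essentially for free: since $D_g(S^3)\subseteq\mathrm{Mod}(\Sigma_g)$, one has
$$\log\delta(D_g(S^3))\ \ge\ \log\delta(\mathrm{Mod}(\Sigma_g))\ \ge\ c_1/g,$$
the last inequality being Penner's estimate recalled in the introduction, with $c_1>0$ a universal constant.

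For the upper bound, the plan is to exhibit, for each $g\ge 2$, an explicit pseudo-Anosov $\phi_g\in D_g(S^3)$ with $\log\lambda(\phi_g)\le c_2/g$. The strategy is to start from small-dilatation pseudo-Anosov spherical braids on $2g+2$ strands and promote them to mapping classes of $\Sigma_g$ via the hyperelliptic double cover. Concretely, there exist families of pseudo-Anosov mapping classes $\beta_n\in\mathrm{Mod}(\Sigma_{0,n})$ with $\log\lambda(\beta_n)\asymp 1/n$ (for instance the Hironaka--Kin examples and related small-dilatation constructions). Setting $n=2g+2$ and using the hyperelliptic branched cover $\pi\colon\Sigma_g\to S^2$ over the $n$ marked points, each $\beta_n$ lifts to $\widetilde{\beta_n}$ in the hyperelliptic subgroup $\mathrm{SMod}(\Sigma_g)$. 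Since the invariant singular foliations of $\beta_n$ pull back through $\pi$, with any $1$-prong singularity at a branch point lifting to a regular $2$-prong point on $\Sigma_g$, the lift is pseudo-Anosov with $\lambda(\widetilde{\beta_n})=\lambda(\beta_n)$.

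The crux is to guarantee that $\widetilde{\beta_n}\in D_g(S^3)$, that is, that the mapping torus $T_{\widetilde{\beta_n}}$ is realized as a $2$-fold branched cover of $S^3$ over some link. This is not automatic: the involution on $T_{\widetilde{\beta_n}}$ induced by the hyperelliptic $\iota$ has quotient the $S^2$-bundle $S^2\times S^1$ rather than $S^3$, so the naive quotient construction produces the wrong base manifold. The plan is to use the explicit correspondence announced in the abstract between braids and branch-set links in $S^3$: given a braid $\beta$ on $2g+2$ strands, produce a link $L_\beta\subset S^3$ (via a suitable bridge/plat-type closure, possibly combined with auxiliary arcs) so that the $2$-fold cover of $S^3$ branched over $L_\beta$ is precisely a $\Sigma_g$-bundle over $S^1$ whose monodromy is the lift of $\beta$.

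The hard part will be setting this correspondence up with enough flexibility that some family of small-dilatation braids lies in its domain, and verifying that the resulting lifted mapping class is genuinely pseudo-Anosov (not reducible in a way that destroys the estimate) with dilatation matching that of $\beta_n$. Once the correspondence is in place and compatible with a Hironaka--Kin-type family, setting $\phi_g:=\widetilde{\beta_{2g+2}}$ yields the upper bound $\log\delta(D_g(S^3))\le c_2/g$ and completes the asymptotic.
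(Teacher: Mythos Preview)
Your outline matches the paper's architecture: Penner gives the lower bound, and the upper bound comes from lifting a small-dilatation $(2g+2)$-strand braid through the hyperelliptic cover and then invoking the branched-cover correspondence (Theorem~\ref{thm:2-fold}) to land in $D_g(S^3)$. But two essential pieces are missing, and they are not merely bookkeeping.

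First, the correspondence in Theorem~\ref{thm:2-fold} does \emph{not} take an arbitrary braid $\beta$ and produce a $\Sigma_g$-bundle double-covering $S^3$. It takes a braid $b$, forms the skew-palindromic braid $\widetilde{b}=\operatorname{skew}(b)\cdot b$, and exhibits $T_{\mathfrak{t}(\widetilde{b})}$ as a $2$-fold branched cover of $M_{C(b)}$, the double cover of $S^3$ branched over the circular plat closure $C(b)$. So (i) only skew-palindromic braids are in the domain of the construction, which rules out grabbing a generic Hironaka--Kin family off the shelf, and (ii) the target is $M_{C(b)}$, not $S^3$. To get $D_g(S^3)$ you must also arrange that $C(b)$ is the unknot, so that $M_{C(b)}\cong S^3$. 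Your proposal does not address either constraint.

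Second, the paper's actual work is precisely here. It chooses the specific family $b_g=\sigma_3\sigma_4\cdots\sigma_{2g+1}$, so that $\widetilde{b_g}$ is skew-palindromic by construction, and then proves via an annulus-twist argument (Lemma~\ref{lem_disk-twist-inv}) that $C(b_g)$ is ambient isotopic to $C(b_1)$, which is visibly unknotted. That $\widetilde{b_g}$ is pseudo-Anosov with $\log\lambda(\widetilde{b_g})\asymp 1/g$ and that its stable foliation is not $1$-pronged at $\partial D$ (so Lemma~\ref{lem_disk} applies and the dilatation survives the lift) is imported from~\cite{HK18}. In short: the strategy you sketch is correct, but the proof lives in the choice of $b_g$ and the verification that $C(b_g)$ is trivial, neither of which your proposal supplies or anticipates.
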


\begin{center}
\begin{figure}[t]
\includegraphics[height=4.5cm]{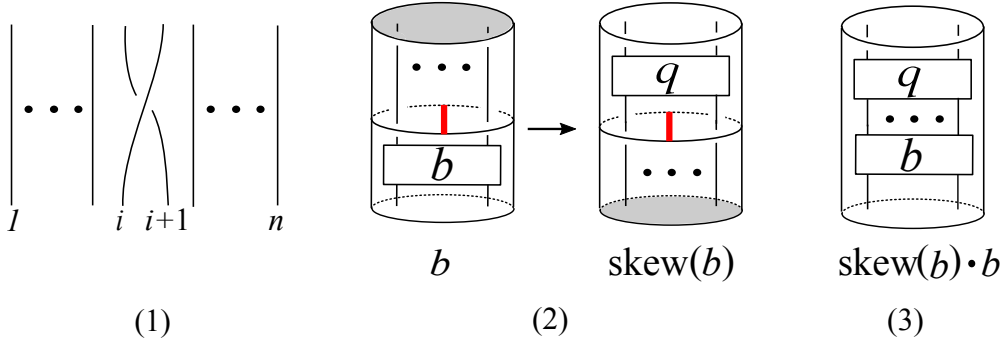}
\caption{(1) $\sigma_i \in B_n$. 
(2) Involution on the cylinder. 
Thick segment in the middle is the fixed point set of the involution. 
(3) $\widetilde{b}= \operatorname{skew}(b) \cdot b$ is invariant under such an involution.} 
\label{fig:braids}
\end{figure}
\end{center}

Let $q_L: M_L \rightarrow S^3$ denote the $2$-fold branched covering map of $S^3$ 
branched over a link $L$ in $S^3$. 
Along the way in the proof of Theorem~\ref{thm_main} 
we give an alternative proof of Theorem \ref{thm_Sakuma} when $M=M_L$ in Theorem \ref{thm:2-fold}. 
A feature of surface bundles $\widetilde{M_L}$ coming from  our construction is that 
their monodromies   can be read off 
the braids obtained from the links as the branched set. 
To state Theorem \ref{thm:2-fold}, we need 3 ingredients. 
\medskip

\noindent
{\bf 1. Involution $\operatorname{skew}: B_n \to B_n$.} 
Let $B_n$ be the (planar) braid group with $n$ strands 
and let $\sigma_i$ denote the Artin generator of $B_n$ as  in Figure \ref{fig:braids}(1). 
We define an involution 
\begin{eqnarray*}
\operatorname{skew}: B_n &\to& B_n
\\
 \sigma_{n_1}^{\epsilon_1} \sigma_{n_2}^{\epsilon_2} \cdots \sigma_{n_k}^{\epsilon_k} 
 &\mapsto& 
 \sigma_{n-n_k}^{\epsilon_k} \cdots \sigma_{n- n_2}^{\epsilon_2} \sigma_{n-n_1}^{\epsilon_1}, 
\hspace{5mm} \epsilon_i = \pm 1.
\end{eqnarray*}
We say that $b \in B_n$ is {\it skew-palindromic} if $\operatorname{skew}(b)= b$. 
The braid  $\operatorname{skew}(b) \cdot b$ is skew-palindromic  for any $b \in B_n$. 
(There is a skew-palindromic braid which can not be written by 
$\operatorname{skew}(b) \cdot b$ for any $b$, for example $\sigma_1 \sigma_2 \sigma_3 \in B_4$.) 
We write $$\widetilde{b} = \operatorname{skew}(b) \cdot b.$$  
Note that $\operatorname{skew}: B_n \rightarrow B_n$ is induced by the involution on the cylinder as shown in Figure \ref{fig:braids}(2) 
and skew-palindromic braids are invariant under such an involution.

In the later section, the map $\operatorname{skew}$ is also regarded as a map 
from the braid group on the sphere or the annulus to itself. 
The above assertion for the braid $\widetilde{b} = \operatorname{skew}(b) \cdot b$ 
also holds in this setting.

\begin{center}
\begin{figure}[t]
\includegraphics[height=2cm]{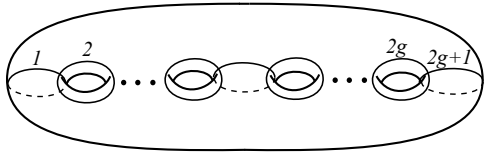}
\caption{Simple closed curves  on $\Sigma_g$.}
\label{fig:hyperelliptic}
\end{figure}
\end{center}


\noindent
{\bf 2. Homomorphism $\mathfrak{t}: B_{2g+2} \rightarrow \mathrm{Mod}(\Sigma_g)$.} 
Let $t_i$ denote the right-handed Dehn twist 
about the simple closed curve with the number $i$ in Figure \ref{fig:hyperelliptic}. 
Then there is a homomorphism 
$$\mathfrak{t}: B_{2g+2} \rightarrow \mathrm{Mod}(\Sigma_g)$$ 
which sends $\sigma_i$ to $t_i$ for $i=1, \ldots, 2g+1$, 
since $\mathrm{Mod}(\Sigma_g)$ has the braid relation. 
(We apply elements of mapping class groups from right to left.) 
The {\it hyperelliptic mapping class group} $\mathcal{H}(\Sigma_g)$ 
is the subgroup of $\mathrm{Mod}(\Sigma_g)$ consisting of elements with representative homeomorphisms 
that commute with some fixed hyperelliptic involution. 
By Birman-Hilden \cite{BH71}, $\mathcal{H}(\Sigma_g)$ is generated by $t_i$'s. 
Thus 
$$\mathcal{H}(\Sigma_g)= \mathfrak{t}(B_{2g+2}) .$$

\noindent
{\bf 3. Circular plat closure $C(b)$.} 
We use two types of links in $S^3$ obtained from braids. 
One is the {\it closure} $\mathrm{cl}(\beta)$ of  $\beta$ $\in B_{g+1}$ 
as in Figure \ref{fig:circular-plat}(1). 
The other is the {\it circular plat closure} $ C(b)$ of  $b \in B_{2g+2}$ with even strands as in Figure \ref{fig:circular-plat}(2)(3). 
We also use the link $C(b) \cup W$, the union of $C(b)$ and the trivial link $W= O \cup O'$ with $2$ components 
as shown in Figure~\ref{fig:circular-plat}(4).

\begin{center}
\begin{figure}[t]
\includegraphics[height=4.5cm]{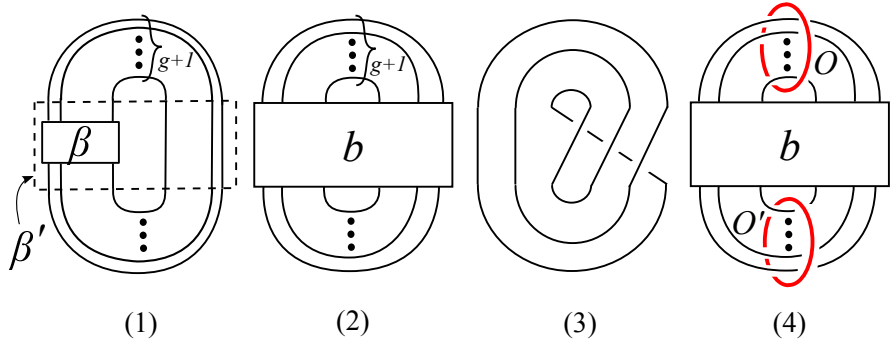}
\caption{(1) $\mathrm{cl}(\beta)$ for $\beta \in B_{g+1}$ and $\beta' \in B_{2g+2}$. 
(2) $C(b)$ for $b \in B_{2g+2}$. 
(3) $C(\sigma_3 \sigma_4 \sigma_5)$. 
(4) $C(b) \cup W$, where $W= O \cup O'$.}
\label{fig:circular-plat}
\end{figure}
\end{center}

Any link  in $S^3$ can be represented by $C(\beta')$ for some braid $\beta'$. 
To see this, it is well-known that 
$L$ is  the closure $\mathrm{cl}(\beta)$ for some $\beta \in B_{g+1}$ ($g \ge 1$).  
The desired braid $\beta' \in B_{2g+2}$ can be  obtained from $\beta$ 
by adding  $g+1$ straight strands as in Figure \ref{fig:circular-plat}(1). 

For a braid  $b \in B_{2g+2}$ 
let $q= q_{C(b)}: M_{C(b)} \rightarrow S^3$ be the $2$-fold branched covering map of $S^3$ 
branched over  $C(b)$. 
There is a $(g+1)$-bridge sphere $S$ for the link $C(b) \subset S^3$. 
Hence $M_{C(b)}$ admits a genus $g$ Heegaard splitting with the 
Heegaard surface $q^{-1}(S)$.  
Then we have the following result.

\begin{theorem}\label{thm:2-fold}
Let $ \widetilde{M_{C(b)}}$ be the $2$-fold branched cover of 
$M_{C(b)}$ branched over the link $q^{-1}(W)$. Then 
$\widetilde{M_{C(b)}}$ is homeomorphic to the mapping torus $T_{\mathfrak{t}(\widetilde{b})}$. 
\end{theorem}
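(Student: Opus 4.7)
The strategy is to re-order the tower of branched covers using its commutative structure. Since $C(b)$ and $W$ are disjoint sublinks of $S^3$ and the two covering involutions commute, the composition $\widetilde{M_{C(b)}} \to M_{C(b)} \to S^3$ is a Galois $(\mathbb{Z}/2)^2$-branched cover, and may equally well be factored as
$$
\widetilde{M_{C(b)}} \xrightarrow{p'} X \xrightarrow{p} S^3,
$$
where $p$ is the $2$-fold branched cover over $W$ and $p'$ is the $2$-fold branched cover over $\widetilde{C} := p^{-1}(C(b))$. Since $W$ is the $2$-component unlink, standard arguments identify $X$ with $S^2 \times S^1$; moreover, the deck involution $\tau$ for $p$ can be chosen so that the $(g+1)$-bridge sphere $S \subset S^3$ lifts to two $S^2$-fibers of a natural product structure $X \to S^1$.

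The heart of the proof is to identify $\widetilde{C} \subset X$ explicitly. Writing $S^3 = B_+ \cup_S B_-$ with $O \subset B_+$, $O' \subset B_-$, and $C(b) \cap B_\pm$ a system of $g+1$ trivial arcs, the double cover $\widetilde{B_\pm}$ of $B_\pm$ branched over $O$ (resp.\ $O'$) is $S^2 \times I$, whose two boundary spheres are the lifts of $S$. The defining feature of the circular plat closure is that each of the $g+1$ plat arcs in $B_\pm$ has odd linking with $O$ (resp.\ $O'$); consequently each such arc lifts to a single "bridging" arc in $S^2 \times I$ running from one boundary sphere to the other. After regluing, this exhibits $X \to S^1$ as an $S^2$-bundle and $\widetilde{C}$ as a closed braid on $2g+2$ strands transverse to every fiber. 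Tracing once around $S^1$, the portion in (the image of) $\widetilde{B_+}$ contributes the braid $b$, while the portion in $\widetilde{B_-}$, being the $\tau$-translate of a copy of $b$, contributes exactly $\operatorname{skew}(b)$, as encoded by the cylindrical involution of Figure~\ref{fig:braids}(2). Thus the braid monodromy of $\widetilde{C}$ in $B_{2g+2}$ equals $\operatorname{skew}(b) \cdot b = \widetilde{b}$.

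With $\widetilde{C}$ recognized as the closed $\widetilde{b}$-braid in the $S^2$-bundle $X \to S^1$, the second cover $p'$ is read off fiberwise. Each $S^2$-fiber meets $\widetilde{C}$ in $2g+2$ points, and its $2$-fold cover branched at these points is $\Sigma_g$ with deck transformation the hyperelliptic involution. By the Birman--Hilden theorem, the braid $\widetilde{b} \in B_{2g+2}$ lifts through $\mathfrak{t}$ to $\mathfrak{t}(\widetilde{b}) \in \mathcal{H}(\Sigma_g)$, and this is the monodromy of the resulting $\Sigma_g$-bundle over $S^1$. Therefore $\widetilde{M_{C(b)}} \cong T_{\mathfrak{t}(\widetilde{b})}$.

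The main obstacle lies in the middle paragraph. That the lifted link $\widetilde{C}$ realizes the closed $\widetilde{b}$-braid \emph{with the two halves in the correct order} is not formal: it requires matching the deck involution of $p$ with the diagrammatic cylindrical involution that defines $\operatorname{skew}$, and verifying the odd-linking claim for every individual plat arc against $O$ and $O'$. Once the picture of $C(b) \cup W$ in Figure~\ref{fig:circular-plat} is unpacked, this step is routine bookkeeping in the braid group, but it is the one place where the specific geometry of the circular plat closure is essential.
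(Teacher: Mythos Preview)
Your overall strategy is exactly the paper's: reorder the $(\mathbb{Z}/2\mathbb{Z})^2$-tower to pass first through $q_W:S^2\times S^1\to S^3$, identify $q_W^{-1}(C(b))$ with the closed spherical braid $\mathrm{cl}(\widetilde{b})$, and then read off the fiberwise double branched cover as $T_{\mathfrak{t}(\widetilde{b})}$ via Birman--Hilden.

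The one genuine difference is the decomposition you use for the key identification. The paper splits $S^3$ into two balls so that $C(b)$ lies \emph{entirely in one ball} and each ball meets $W$ in a trivial $2$-string tangle; each ball then lifts under $q_W$ to a solid torus, and the skew-palindromic form $\operatorname{skew}(b)\cdot b$ is seen directly in that single solid torus (this is what Figure~\ref{fig:quotient} is recording). You instead cut along the bridge sphere $S$, separating $O$ from $O'$; each ball then lifts to $S^2\times I$. This is also workable, but two points in your bookkeeping need correcting. First, in a double cover an arc with endpoints on the unbranched boundary lifts to \emph{two} arcs, not one; the odd linking with $O$ (resp.\ $O'$) is exactly what makes each of those two lifts run between the two boundary spheres, so that the $g{+}1$ caps produce $2g{+}2$ braid strands. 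Second, the attribution ``$\widetilde{B_+}$ contributes $b$ and $\widetilde{B_-}$ contributes $\operatorname{skew}(b)$'' is not how the two halves arise in your decomposition: if the braid box $b$ sits on the $B_+$ side together with the top caps and $O$, then \emph{both} $b$ and its deck-translate appear in $\widetilde{B_+}$, while $\widetilde{B_-}$ carries only the trivial strands coming from the bottom caps. The identification with $\operatorname{skew}(b)\cdot b$ then comes from analyzing the deck involution on $\widetilde{B_+}$ alone. The paper's choice of decomposition sidesteps this asymmetry and makes the ``observe'' step immediate from the picture.
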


\begin{rem}
\label{rem:after_thm}
To be precise, $ \widetilde{M_{C(b)}}$ is the $2$-fold branched cover of $M_{C(b)}$ 
branched over $q^{-1}(W)$ obtained as the ${\Bbb Z}/2 {\Bbb Z}+ {\Bbb Z}/2 {\Bbb Z}$-cover of $S^3$ 
branched over the link $C(b) \cup W$ associated with the epimorphism 
$H_1(S^3 \setminus (C(b) \cup W)) \rightarrow {\Bbb Z}/2 {\Bbb Z}+ {\Bbb Z}/2 {\Bbb Z}$ 
which maps the meridians of $C(b)$ to the generator of the first factor and the meridians of $W$ 
to the generator of the second factor. 
\end{rem}

\noindent
{\bf Acknowledgments.} 
We thank Makoto Sakuma and Yuya Koda for helpful conversations and comments. 
We also thank the anonymous referee for valuable comments. 
The first author was supported by 
Grant-in-Aid for
Scientific Research (C) (No. 16K05156), JSPS. 
The second author was supported by 
Grant-in-Aid for
Scientific Research (C) (No. 18K03299), JSPS.

\section{Proof of Theorem \ref{thm:2-fold}}
\label{section_reprove}


\begin{proof}[Proof of Theorem~\ref{thm:2-fold}] 
We construct the $3$-sphere $S^3$ 
from two copies of the  $3$-ball $B^3$  
by gluing their boundaries together. 
Consider the link $C(b) \cup W$ 
so that $C(b)$ is contained in one of the $3$-balls, 
and  $W$ is given by the union of the four thick segments in the two $3$-balls, 
see Figure~\ref{fig:quotient}(2). 
Let $S$ be the sphere in $S^3$ which is the union of the two shaded disks 
in the same figure.
The $2$-sphere $S$ is a  $(g+1)$-bridge sphere for  $C(b)$, and 
the preimage $q^{-1}(S)$ 
  is a genus $g$ Heegaard surface of $M_{C(b)}$.

Let $q_{W} : M_W \to S^3$ be  
the $2$-fold branched covering map of $S^3$ branched over $W$ (Figure~\ref{fig:quotient}(1)). 
The preimage  $q^{-1}_{W}(B^3)$  is homeomorphic to the solid torus $D^2 \times S^1$. 
Then $M_W$ is obtained from  two copies of $D^2 \times S^1$ by gluing their boundaries together, 
and hence $M_W$ is homeomorphic to $S^2 \times S^1$. 
Observe that the link 
$q_{W}^{-1} (C(b))$ is the closure of the {\it spherical} braid $\widetilde{b}= \operatorname{skew}(b) \cdot b$, i.e., 
$$ q_{W}^{-1} (C(b))=  \mathrm{cl}(\widetilde{b}) \subset S^2 \times S^1.$$ 

Let $p: N_{\mathrm{cl}(\widetilde{b})} \rightarrow S^2 \times S^1$ 
be the $2$-fold branched covering map of $S^2 \times S^1$ branched over $\mathrm{cl}(\widetilde{b})$. 
The $2$-fold branched cover of the level surface $S^2 \times \{u\}$ for $u \in S^1$ branched at the  $2g+2$ points 
in $\Bigl( (S^2 \times \{u\}) \cap \mathrm{cl}(\widetilde{b}) \Bigr)$ is a closed surface of genus $g$. 
Thus  $N_{\mathrm{cl}(\widetilde{b})}$ is a $\Sigma_g$-bundle over $S^1$ 
with the monodromy $\mathfrak{t}(\widetilde{b}) $, i.e., 
$N_{\mathrm{cl}(\widetilde{b})}$ is homeomorphic to $T_{\mathfrak{t}(\widetilde{b})}$.

We can observe that the composition 
$$q_W \circ p: T_{\mathfrak{t}(\widetilde{b})} \cong N_{\mathrm{cl}(\widetilde{b})} \rightarrow S^3$$ 
is the ${\Bbb Z}/2 {\Bbb Z}+ {\Bbb Z}/2 {\Bbb Z}$-cover of $S^3$ branched over the link 
$C(b) \cup W$ described in Remark \ref{rem:after_thm}. 
Hence $T_{\mathfrak{t}(\widetilde{b})} \cong N_{\mathrm{cl}(\widetilde{b})} $ is identified with 
$2$-fold branched cover  $\widetilde{M_{C(b)}}$. 
This completes the proof.
\end{proof}

\begin{center}
\begin{figure}[t]
\includegraphics[width=4.8 in]{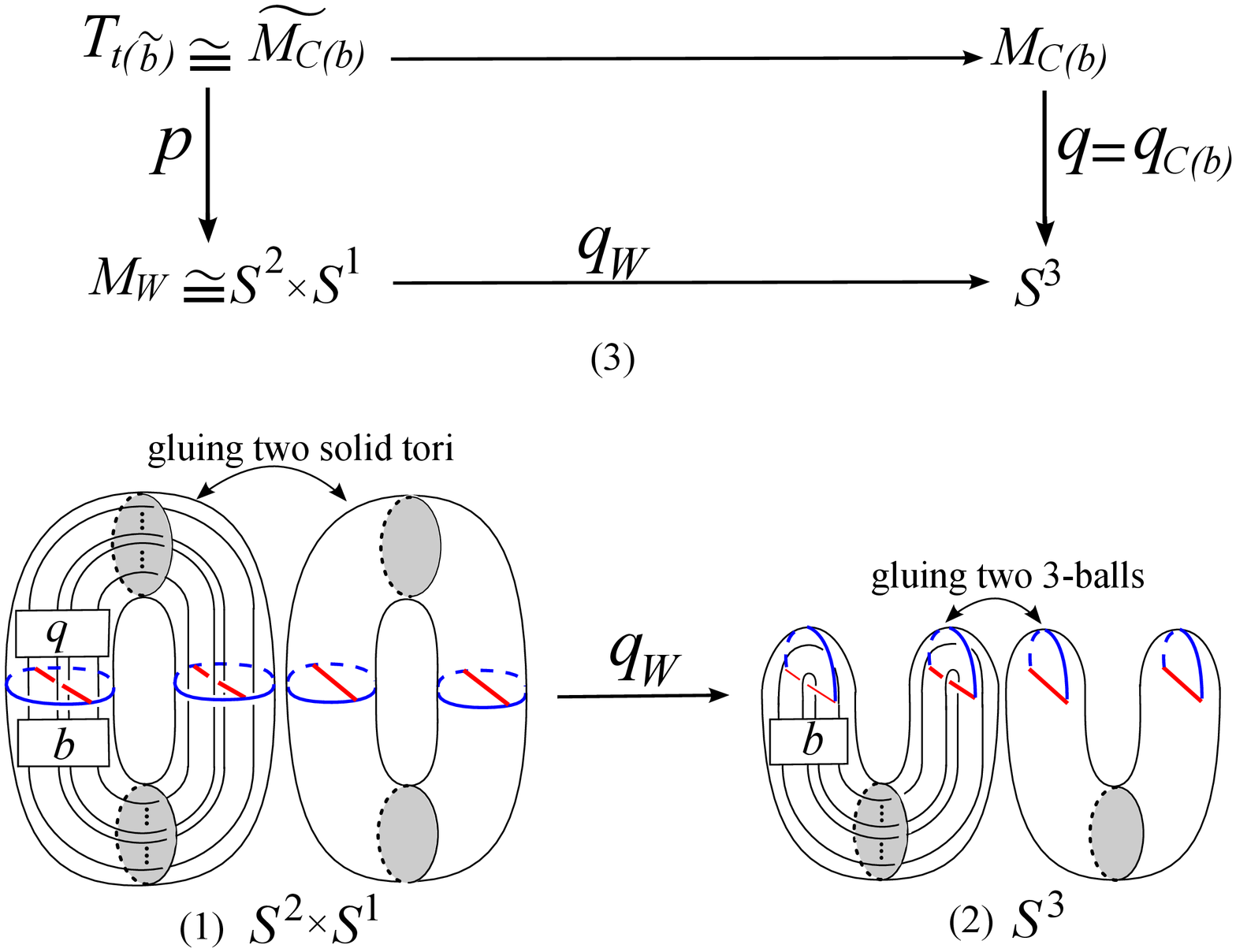}
\caption{
(1) $\mathrm{cl}(\widetilde{b}) \subset  S^2 \times S^1$. 
(2) $C(b) \cup W \subset S^3$. 
(3) Diagram: 
$\widetilde{M_{C(b)}} \rightarrow M_{C(b)}$ is the $2$-fold branched cover of 
$M_{C(b)}$ branched over $q^{-1}(W)$.}
\label{fig:quotient}
\end{figure}
\end{center}

\section{Proof of Theorem \ref{thm_main}}
\label{section_pA}

\begin{center}
\begin{figure}[t]
\includegraphics[width=4 in]{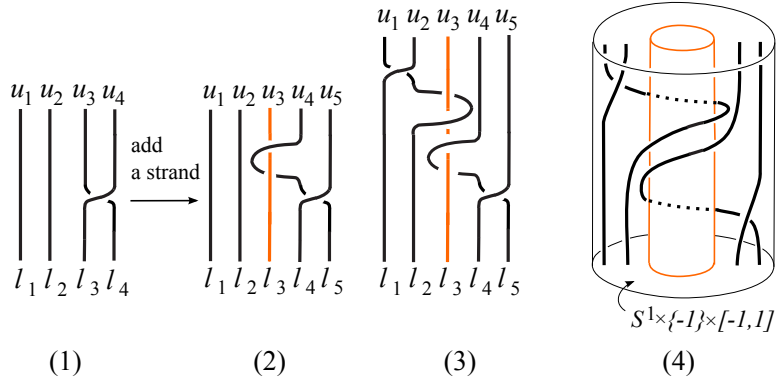}
\caption{
(1) $b= \sigma_3 $. 
(2) $b^{\circ}= \sigma_3^2 \sigma_4 $. 
(3) $\widetilde{b^{\circ}}$. 
($\widetilde{b}= \sigma_1 \sigma_3$ is obtained from $\widetilde{b^{\circ}}  $ by removing the strand 
with endpoints $\ell_3$ and $u_3$.)  
(4) Braid $\widetilde{b} $ on  $\mathcal{A}$.}
\label{fig:adding}
\end{figure}
\end{center}

Given a braid $b$ 
we first give a construction of a braid $b'$ (with more strands than $b$) such that 
$C(b)$ is ambient isotopic  to $C(b')$.

The bottom and  top endpoints of a planar braid with $n$ strands are denoted by 
$l_1, \ldots, l_n$ and $u_1, \ldots, u_n$ from left to right. 
For a braid $b \in B_{2g+2}$  with even strands,  
we choose a braid $b^{\circ} \in B_{2g+3}$ 
obtained from $b$ by adding a strand, say $b^{\circ}(g+2)$ 
connecting the middle of the two points $l_{g+1}$ and $l_{g+2}$  with 
the middle of the two points $u_{g+1}$ and $u_{g+2}$. 
Of course $b^{\circ}$ is not unique. 
For example 
when $b= b_1= \sigma_3 \in B_4$, one can choose  $b^{\circ}(= b_1^{\circ})= \sigma_3^2 \sigma_4 \in B_5$. 
See Figure~\ref{fig:adding}. 

We consider $\widetilde{b^{\circ}} = \operatorname{skew}(b^{\circ}) \cdot b^{\circ} \in B_{2g+3}$ 
with bottom endpoints 
$l_1, \ldots, l_{2g+3}$ and top endpoints $u_1, \ldots, u_{2g+3}$. 
The braid $\widetilde{b^{\circ}} $ 
has the strand $\widetilde{b^{\circ}}(g+2)$ 
with endpoints $l_{g+2}$ and $u_{g+2}$. 
If we remove this strand from $\widetilde{b^{\circ}}$, 
then we obtain $\widetilde{b} = \operatorname{skew}(b) \cdot b$. 

\begin{center}
\begin{figure}[t]
\includegraphics[width=3.5 in]{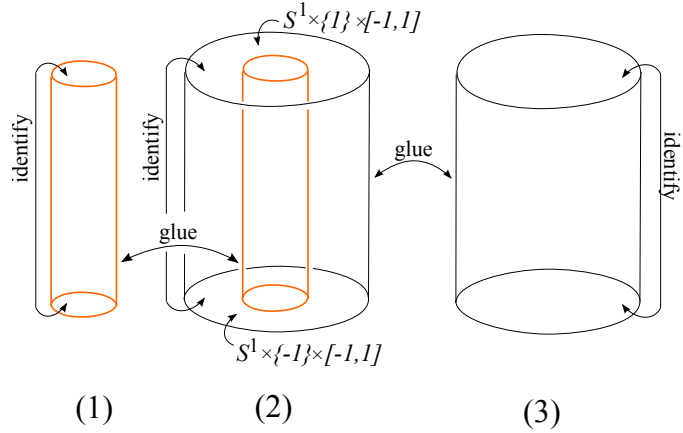}
\caption{
(1) Solid torus $(D^2 \times S^1)_{-1}$. 
(2) $S^1 \times S^1 \times [-1, 1]$ obtained from $S^1 \times [-1,1] \times [-1, 1]$
by identifying the two annuli $S^1 \times \{1\} \times [-1,1]$ and $S^1 \times \{-1\} \times [-1,1]$. 
(3) Solid torus $(D^2 \times S^1)_{+1}$.}
\label{fig:piece}
\end{figure}
\end{center}

Now we construct $S^2 \times S^1$ 
from three pieces, two solid tori $(D^2 \times S^1)_{\pm1}$ 
and the product $S^1 \times S^1 \times [-1, 1]$ 
of a torus $S^1 \times S^1$ and the interval $[-1,1]$  by gluing  
$(\partial D^2 \times S^1)_i$ to $S^1 \times S^1 \times \{ i \}$ 
together for $i = \pm1$. 
See Figure~\ref{fig:piece}. 
We think of $S^{1}$ as the quotient space $[-1,1] / (-1 \sim 1)$, and 
consider the product 
$$S^{1} \times S^{1} \times [-1,1] = S^{1} \times [-1,1]/(-1 \sim 1) \times [-1,1].$$ 
For the braided link $\mathrm{br}(\widetilde{b^{\circ}}) = \mathrm{cl}(\widetilde{b^{\circ}}) \cup A$ in $S^3$, 
we perform the $0$-surgery along the braid axis $A$. 
Then the image of $\mathrm{cl}(\widetilde{b^{\circ}})$ forms a link in $S^2 \times S^1$, 
which continues to denote by the same symbol $\mathrm{cl}(\widetilde{b^{\circ}})$. 
We deform this link $\mathrm{cl}(\widetilde{b^{\circ}}) $ in $S^2 \times S^1$ so that 
the  knot $\mathrm{cl}(\widetilde{b^{\circ}} (g+2))$ becomes  the core of $(D^2 \times S^1)_{-1}$ 
and $\mathrm{cl}(\widetilde{b}) = \mathrm{cl}(\widetilde{b^{\circ}}) \setminus \mathrm{cl}(\widetilde{b^{\circ}} (g+2))$ 
is contained in $ S^1 \times S^1 \times [-1, 1]$. 
One can regard $\widetilde{b}$ as a braid on the annulus 
$\mathcal{A}:= S^{1} \times \{ -1 \} \times [-1,1]$ 
which is embedded in $S^{1} \times [-1,1] \times [-1,1]$, and 
one can think of the link $\mathrm{cl}(\widetilde{b})$ 
as the closure of the braid $\widetilde{b}$ on  $\mathcal{A}$. 
See Figure~\ref{fig:adding}(4). 
Let 
$$R: S^2 \times S^1 \rightarrow S^2 \times S^1$$ 
be the deck transformation of 
$q_{W} : S^2 \times S^1 \to S^3$  in the proof of Theorem~\ref{thm:2-fold}. 
Then 
$q_{W}$ sends the fixed point set of the involution $R$ to the trivial link $W$ (Figure~\ref{fig:quotient}(1)(2)).  
Let 
$$f: S^1 \times S^1 \rightarrow S^1 \times S^1 $$  
be any orientation-preserving homeomorphism. 
We may assume that $f$ commutes with the involution 
\begin{eqnarray*}
\iota : S^1 \times S^1 &\to& S^1 \times S^1
\\
(x,y) &\mapsto& (-x,-y). 
\end{eqnarray*}
We consider the homeomorphism 
$$\Phi_f = f \times \mathrm{id}_{[-1,1]} : S^1 \times S^1 \times [-1,1] \rightarrow S^1 \times S^1 \times [-1,1].$$
The image of $\mathrm{cl}(\widetilde{b})$ under $\Phi_f$ may or may not be the closure of some braid on $\mathcal{A}$. 
We assume the former case $(*)$: 
\begin{quote}
$(*)$ $\Phi_f(\mathrm{cl}(\widetilde{b})) = \mathrm{cl}(\gamma)$ for some braid $\gamma$
on $\mathcal{A}$. 
\end{quote}
Then the involution 
$R|_{S^1 \times S^1 \times [-1, 1]} = \iota \times \mathrm{id}_{[-1,1]}$ has the following property. 
$$R(\mathrm{cl}(\beta)) = \mathrm{cl}(\operatorname{skew}(\beta))\hspace{2mm} \mbox{for any braid }\beta \mbox{ on}\  \mathcal{A}.$$
cf. Figure~\ref{fig:braids}(2). 
Since $f$ commutes with $\iota$, 
it follows that $\Phi_f$  commutes with $R|_{S^1 \times S^1 \times [-1, 1]}$. 
Hence we have 
$$\mathrm{cl}(\gamma) = \Phi_f(\mathrm{cl}(\widetilde{b})) = \Phi_f(\mathrm{cl}(\operatorname{skew}(\widetilde{b}))) = 
\Phi_f R (\mathrm{cl}(\widetilde{b}))= R \Phi_f(\mathrm{cl}(\widetilde{b})) =R ( \mathrm{cl}(\gamma)) .$$ 
(The first and last equality come from the assumption $(*)$, and the second equality holds since $\widetilde{b}$ is skew-palindromic.)   
Thus $\mathrm{cl}(\gamma) = R ( \mathrm{cl}(\gamma)) $. 
We further assume that 
\begin{quote}
$(**)$ $\bigl( \Phi_f(\mathrm{cl}(\widetilde{b}))= \bigr) \mathrm{cl}(\gamma)= \mathrm{cl}(\widetilde{b_f})$ 
for some braid $b_f$ on  $\mathcal{A}$. 
\end{quote}

\begin{rem} 
Clearly  $(**)$ implies 
$\mathrm{cl}(\gamma) = R (\mathrm{cl}(\gamma))$. 
It is likely the converse holds. 
\end{rem}

Now, we think of the braid $b_{f}$ on $\mathcal{A}$ as a planar braid as in Figure \ref{fig:adding}, 
and consider the link $C(b_{f})$ in $S^{3}$. 
We have the following lemma. 

\begin{lem}
\label{lem_disk-twist-inv} 
Under the assumptions $(*)$ and $(**)$, 
$C(b)$ and $C(b_f)$ are ambient isotopic. 
\end{lem}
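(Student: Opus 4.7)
The plan is to produce an orientation-preserving self-homeomorphism $\psi$ of $S^{3}$, which is automatically isotopic to the identity, such that $\psi(C(b)) = C(b_{f})$. This map $\psi$ will be obtained by descending $\Phi_{f}$ through the branched cover $q_{W}$ and extending over the two $3$-balls on either side of the middle region $S^{2}\times[-1,1]$.

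First, since $f$ commutes with $\iota$, the homeomorphism $\Phi_{f} = f \times \mathrm{id}_{[-1,1]}$ commutes with $R|_{S^{1}\times S^{1}\times [-1,1]} = \iota \times \mathrm{id}_{[-1,1]}$. Therefore $\Phi_{f}$ descends to a self-homeomorphism $\bar\Phi_{f}$ of the quotient $(S^{1}\times S^{1}\times[-1,1])/(\iota \times \mathrm{id}) \cong S^{2}\times[-1,1]$, which is precisely the middle region in the decomposition $S^{3} = B^{3} \cup (S^{2}\times[-1,1]) \cup B^{3}$ used in the proof of Theorem \ref{thm:2-fold}. The descent $\bar f : S^{2}\to S^{2}$ of $f$ extends by coning from the center of each $3$-ball to a self-homeomorphism of that ball, and together with $\bar\Phi_{f}$ on the middle region these assemble into a self-homeomorphism $\psi : S^{3}\to S^{3}$. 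Since $\Phi_{f}$, $\iota$, and the coning are all orientation-preserving, $\psi$ is orientation-preserving and hence isotopic to $\mathrm{id}_{S^{3}}$.

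The remaining task is to check $\psi(C(b)) = C(b_{f})$ as subsets of $S^{3}$. After the deformation described just before the statement, $\mathrm{cl}(\widetilde{b})$ lies entirely in the middle piece $S^{1}\times S^{1}\times[-1,1]$ of $S^{2}\times S^{1}$, so $C(b) = q_{W}(\mathrm{cl}(\widetilde{b}))$ lies in the middle region $S^{2}\times[-1,1]$ of $S^{3}$. On that region $\psi$ agrees with $\bar\Phi_{f}$, so by $R$-equivariance and assumption $(**)$,
\[
\psi(C(b)) = \bar\Phi_{f}(q_{W}(\mathrm{cl}(\widetilde{b}))) = q_{W}(\Phi_{f}(\mathrm{cl}(\widetilde{b}))) = q_{W}(\mathrm{cl}(\widetilde{b_{f}})).
\]
Applying the construction in the proof of Theorem \ref{thm:2-fold} to the planar braid obtained from $b_{f}$ by cutting the annulus $\mathcal{A}$ open then identifies $q_{W}(\mathrm{cl}(\widetilde{b_{f}}))$ with $C(b_{f})$, giving $\psi(C(b)) = C(b_{f})$ and hence the desired ambient isotopy between $C(b)$ and $C(b_{f})$ in $S^{3}$.

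The main obstacle is the last identification $q_{W}(\mathrm{cl}(\widetilde{b_{f}})) = C(b_{f})$. Since $b_{f}$ is a priori an annular braid on $\mathcal{A}$ whereas $C(\cdot)$ was defined for planar braids, one must verify that the straightening of $b_{f}$ is compatible with $q_{W}$ and recovers $C(b_{f})$ up to isotopy. A secondary bookkeeping issue is making sure that after the preliminary deformation, both $\mathrm{cl}(\widetilde{b})$ and $\mathrm{cl}(\widetilde{b_{f}})$ really do sit in the middle piece of $S^{2}\times S^{1}$, so that the descent through $q_{W}$ places $C(b)$ and $C(b_{f})$ inside the middle region of $S^{3}$ where $\psi$ acts as $\bar\Phi_{f}$.
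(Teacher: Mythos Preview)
Your argument is correct and is essentially the paper's own proof: descend $\Phi_f$ through the involution to obtain a self-homeomorphism of $S^2\times[-1,1]$, extend it over the two complementary $3$-balls, and use that the resulting self-homeomorphism of $S^3$ carries $C(b)$ to $C(b_f)$. The two ``obstacles'' you flag are non-issues in the paper's setup---the paragraph immediately preceding the lemma declares that $b_f$ is to be read as a planar braid (so $q_W(\mathrm{cl}(\widetilde{b_f}))=C(b_f)$ by the same reasoning as in the proof of Theorem~\ref{thm:2-fold}), and the deformation placing $\mathrm{cl}(\widetilde{b})$ inside $S^1\times S^1\times[-1,1]$ is exactly the one spelled out just before assumption~$(*)$.
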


\begin{proof} 
Note that the quotient $ \big(S^1 \times S^1 \times [-1,1] \big)/R$ is homeomorphic to $S^2 \times [-1,1]$. 
Since $\Phi_f$ commutes with $R|_{S^1 \times S^1 \times [-1,1]}$, 
$\Phi_f$ induces a self-homeomorphism 
$$\underline{\Phi}_f: S^2 \times [-1,1] \rightarrow S^2 \times [-1,1].$$
Since $ \Phi_f(\mathrm{cl}(\widetilde{b}))= \mathrm{cl}(\widetilde{b_f})$ we have 
$\underline{\Phi}_f (C(b))= C(b_f)$. 
Any orientation-preserving self-homeomorphism 
on $S^2$ is isotopic to the identity, 
and $S^3$ is a union of $S^2 \times [-1,1]$ and 
two $3$-balls by gluing the boundaries together. 
Thus $\underline{\Phi}_f$ extends to 
a self-homeomorphism on $S^3$ which sends    $C(b)$ to $C(b_f)$. 
This completes the proof.
\end{proof}

\begin{center}
\begin{figure}[t]
\includegraphics[height=4.5cm]{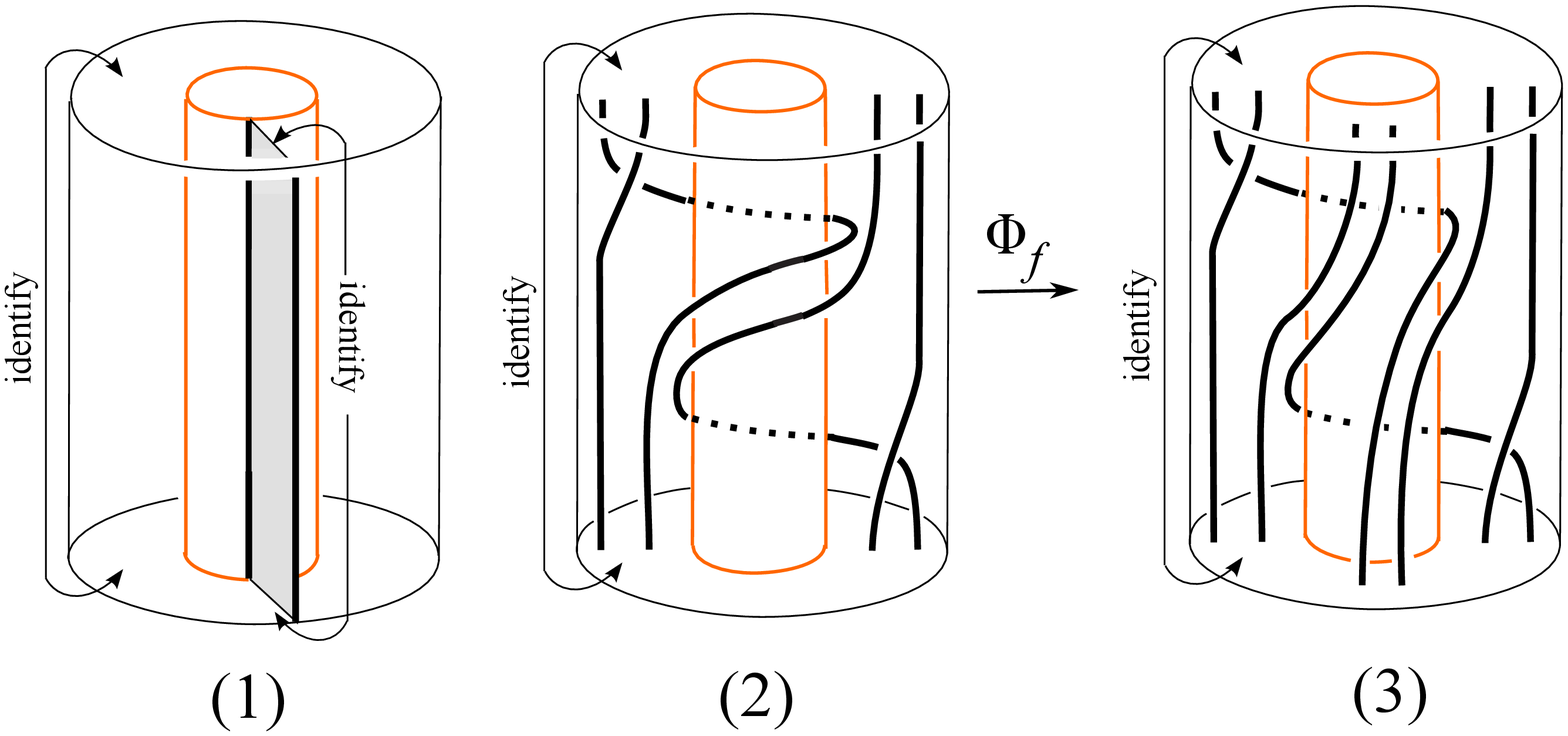}
\caption{
(1) Shaded region 
indicates  annulus ${\Bbb A}$ with boundary $\{v\} \times S^1 \times \{\pm 1\}$. 
(3) $\mathrm{cl}(\widetilde{b_1}) $ and 
(4) $\Phi_f(\mathrm{cl}(\widetilde{b_1})) =\mathrm{cl}(\widetilde{b_2})$ 
contained in $S^1 \times S^1 \times [-1,1]$.} 
\label{fig:disk-twist}
\end{figure}
\end{center}

Let us consider the mapping class group $\mathrm{Mod}(D_n)$ of the $n$-punctured disk $D_n$ 
preserving  the boundary $\partial D$ of the disk setwise. 
We have a surjective homomorphism 
$$\Gamma: B_n \rightarrow \mathrm{Mod}(D_n)$$
which sends each generator $\sigma_i$ to the right-handed half twist 
between the $i$th and $(i+1)$st punctures. 
We say that $\beta \in B_n$ is {\it pseudo-Anosov} if 
$\Gamma(\beta) \in  \mathrm{Mod}(D_n)$ is of the pseudo-Anosov type. 
When $\beta$ is a pseudo-Anosov braid, the dilatation $\lambda(\beta)$ is defined by the dilatation of 
$\lambda(\Gamma(\beta))$. 

We consider the above homomorphism 
$\Gamma: B_{2g+2} \rightarrow \mathrm{Mod}(D_{2g+2})$ when $n= 2g+2$. 
Recall the homomorphism 
$\mathfrak{t}: B_{2g+2} \rightarrow \mathrm{Mod}(\Sigma_g)$. 
The following lemma relates dilatations of $\beta $ and $\mathfrak{t}(\beta) $. 

\begin{lem}
\label{lem_disk}
Let $\beta \in B_{2g+2}$ be pseudo-Anosov and let 
$\Phi_{\beta} : D_{2g+2} \rightarrow D_{2g+2}$ be a pseudo-Anosov homeomorphism 
which represents $\Gamma(\beta) \in \mathrm{Mod}(D_{2g+2})$. 
Suppose that the stable foliation $\mathcal{F}_{\beta}$ for $\Phi_{\beta}$ defined on $D_{2g+2}$ 
is not $1$-pronged at the boundary $\partial D$ of the disk. 
Then $\mathfrak{t}(\beta) \in \mathrm{Mod}(\Sigma_g)$ is pseudo-Anosov, 
and $\lambda(\mathfrak{t}(\beta) ) = \lambda(\beta)$ holds. 
\end{lem}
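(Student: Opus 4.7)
My plan is to construct a pseudo-Anosov representative of $\mathfrak{t}(\beta)$ as a lift of $\Phi_\beta$ through the hyperelliptic double cover, and to read the dilatation off directly from the lift. The argument splits into three steps: (i) take the $2$-fold branched cover of $D^2$ at the $2g+2$ marked points, obtaining a compact surface of genus $g$ with two boundary circles; (ii) lift $\Phi_\beta$ to a pseudo-Anosov homeomorphism of this cover; and (iii) cap off the two boundary components to produce $\Psi_\beta : \Sigma_g \to \Sigma_g$, verifying both that $\Psi_\beta$ remains pseudo-Anosov and that $[\Psi_\beta] = \mathfrak{t}(\beta)$.

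For step (i), let $\pi : \widehat{S} \to D^2$ be the $2$-fold branched covering with branch locus the $2g+2$ marked points of $D_{2g+2}$. The monodromy around $\partial D$ is the product of $2g+2$ copies of the non-trivial element of $\mathbb{Z}/2\mathbb{Z}$, hence trivial, so $\partial D$ lifts to two disjoint circles $\partial_1, \partial_2$ exchanged by the deck involution. By Riemann--Hurwitz $\chi(\widehat{S}) = -2g$, so $\widehat{S}$ is a compact genus $g$ surface with two boundary components; capping off $\partial_1 \cup \partial_2$ by disks recovers $\Sigma_g$, and the deck transformation extends to the hyperelliptic involution on $\Sigma_g$.

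For step (ii), since $\Phi_\beta$ preserves the set of marked points and $\partial D$, the Birman--Hilden theorem lets me lift it to a homeomorphism $\widetilde{\Phi}_\beta : \widehat{S} \to \widehat{S}$ commuting with the deck involution. Pulling back the invariant stable and unstable measured foliations of $\Phi_\beta$ yields $\widetilde{\Phi}_\beta$-invariant measured foliations on $\widehat{S}$ stretched by $\lambda(\beta)$: a $k$-pronged singularity of $\mathcal{F}_\beta$ at a marked point lifts to a $k$-pronged singularity at the corresponding ramification point. Hence $\widetilde{\Phi}_\beta$ is pseudo-Anosov on $\widehat{S}$ with dilatation $\lambda(\beta)$.

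For step (iii), the delicate point is the foliation behavior near $\partial_1, \partial_2$ after capping. Because $\partial D$ lifts as two disjoint circles, a collar of $\partial D$ lifts as two disjoint collars, each mapped diffeomorphically onto the collar in $D^2$; thus the number of prongs of the pulled back foliation along each $\partial_i$ equals the number at $\partial D$, which by assumption is some $k \ge 2$. When I glue in a disk to form $\Sigma_g$, the center of the new disk inherits a $k$-pronged singularity, which is admissible for a pseudo-Anosov foliation on a closed surface precisely because $k \ge 2$ (with $k=2$ meaning a regular point); this is where the not-$1$-pronged hypothesis does its work. Hence $\Psi_\beta$ is pseudo-Anosov on $\Sigma_g$ with dilatation $\lambda(\beta)$. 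To finish, I identify $[\Psi_\beta]$ with $\mathfrak{t}(\beta)$: both arise from the lift-and-cap construction applied to representatives of elements of $B_{2g+2}$, so it suffices to check agreement on Artin generators. For each $\sigma_i$ the lift of a half-twist supported near the $i$-th and $(i+1)$-st marked points is (isotopic to) the right-handed Dehn twist about the $\pi$-preimage of an arc joining them, which is precisely the simple closed curve labeled $i$ in Figure \ref{fig:hyperelliptic}; hence $[\Psi_{\sigma_i}] = t_i = \mathfrak{t}(\sigma_i)$. The main obstacle I expect is the clean prong-count analysis at $\partial_1, \partial_2$ that pins down where the not-$1$-pronged hypothesis is used; everything else is standard Birman--Hilden naturality and the fact that finite (branched) covers preserve dilatations.
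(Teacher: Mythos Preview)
Your proof is correct and takes essentially the same approach as the paper: lift through the hyperelliptic branched cover via Birman--Hilden, then use the not-$1$-pronged hypothesis to extend the invariant foliations over the cap, so that the dilatation is preserved. The only cosmetic difference is the order of operations---the paper caps $\partial D$ first (passing from $D_{2g+2}$ to $\Sigma_{0,2g+2}$) and then takes the double cover $\Sigma_g \to \Sigma_{0,2g+2}$, whereas you cover the disk first (obtaining a genus~$g$ surface with two boundary circles) and cap afterward; the two constructions visibly commute.
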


\begin{proof}
Since $\mathcal{F}_{\beta}$ 
is not $1$-pronged at $\partial D$, 
$\Phi_{\beta} : D_{2g+2} \rightarrow D_{2g+2}$ induces a pseudo-Anosov 
homeomorphism 
$\Phi_{\beta}': \Sigma_{0,2g+2} \rightarrow \Sigma_{0,2g+2}$ 
by  filling  $\partial D$ with a disk. 
By Birman-Hilden~\cite{BH71}, 
we have  a surjective homomorphism 
$$\mathfrak{q}: \mathcal{H}(\Sigma_g)  \rightarrow \mathrm{Mod} (\Sigma_{0, 2g+2})$$ 
sending the Dehn twist $t_i$ to the right-handed half twist $h_i$ between the $i$th and $(i+1)$st punctures 
for $i= 1, \cdots, 2g+1$. 
Consider the $2$-fold branched cover $\Sigma_g \rightarrow \Sigma_{0, 2g+2}$ 
branched at the $2g+2$ marked points (corresponding to the punctures of $\Sigma_{0, 2g+2}$).  
Then there is a lift 
$f_{\beta}: \Sigma_g \rightarrow \Sigma_g$ of  $\Phi_{\beta}': \Sigma_{0,2g+2} \rightarrow \Sigma_{0,2g+2}$ 
which satisfies 
$\mathfrak{t}(\beta) = [f_{\beta}] \in \mathcal{H}(\Sigma_g)$. 
Note that 
the stable foliation $\mathcal{F}_{\beta}$ for $\Phi_{\beta}$ extends to the stable foliation $\mathcal{F}'_{\beta}$ 
for $\Phi'_{\beta}$ 
by the assumption that $\mathcal{F}_{\beta}$ 
is not $1$-pronged at $\partial{D}$. 
The stable foliation $\mathcal{F}'_{\beta}$ defined on $\Sigma_{0,2g+2}$ is lifted to the stable foliation for $f_{\beta}$ 
defined on $\Sigma_g$.  
Thus $f_{\beta}$ is a pseudo-Anosov homeomorphism which represents 
$\mathfrak{t}(\beta) = [f_{\beta}]$, and 
we have 
$$\lambda([f_{\beta}]) = \lambda([\Phi_{\beta}']) = \lambda([\Phi_{\beta}]) = \lambda(\beta).$$
This completes the proof. 
\end{proof}

\begin{center}
\begin{figure}[t]
\includegraphics[height=4.5cm]{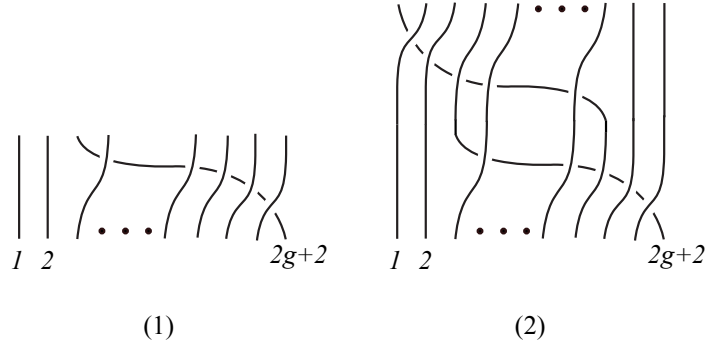}
\caption{
(1) $b_g \in B_{2g+2}$. 
(2) $\widetilde{b_g} \in B_{2g+2}$.}
\label{fig:122334_sakkuma}
\end{figure}
\end{center}

\begin{proof}[Proof of Theorem~\ref{thm_main}] 
For $g \ge 1$, we consider $b_g = \sigma_3 \sigma_4 \cdots \sigma_{2g+1} \in B_{2g+2}$ and 
$$\widetilde{b_g}= \sigma_1 \sigma_2 \cdots \sigma_{2g-1} \cdot \sigma_3 \sigma_4 \cdots \sigma_{2g+1} \in B_{2g+2},$$ 
see Figure~\ref{fig:122334_sakkuma}. 
By Penner's result 
it is enough to prove that 
$\mathfrak{t}(\widetilde{b_g}) $ is a pseudo-Anosov element in $D_g(S^3)$ for large $g$ 
and $\log \lambda(\mathfrak{t}(\widetilde{b_g})) \asymp 1/g$ holds.

Applying Theorem~\ref{thm:2-fold} for the braid $b_g $ 
we have the $2$-fold branched cover 
$$T_{\mathfrak{t}(\widetilde{b_g})} \rightarrow M_{C(b_g)}$$ 
branched over $q^{-1}(W)$. 
We first prove that $M_{C(b_g)} \cong S^3$ for $g \ge 1$. 
Clearly $C(b_1) = C(\sigma_3)$ is a trivial knot, 
and hence $M_{C(b_1)} \cong S^3$. 
We add a strand to $b_1= \sigma_3 \in B_4$ so that $b_1^{\circ} = \sigma_3^2  \sigma_4 \in B_5$, 
and  think of $\widetilde{b_1^{\circ}}$ as a braid on $\mathcal{A}$, see Figure~\ref{fig:adding}.   
Choose any $v \in S^1$ and consider the annulus ${\Bbb A}$ in $S^1 \times S^1 \times [-1,1]$ 
with boundary  $\{v\} \times S^1 \times \{\pm 1\}$, 
see Figure~\ref{fig:disk-twist}(1). 
Let $f: S^1 \times S^1 \rightarrow S^1 \times S^1$ be the Dehn twist on $S^1 \times S^1$  about $\{ v \} \times S^1$. 
Then the self-homeomorphism $\Phi_f$ on $S^1 \times S^1 \times [-1, 1]$
is an  annulus twist about ${\Bbb A}$, see Figure~\ref{fig:disk-twist}(2)(3). 
Observe that 
$\Phi_f (\mathrm{cl}(\widetilde{b_1})) = \mathrm{cl}(\widetilde{b_2})$ 
satisfies the assumptions $(*)$ and $(**)$. 
By repeating this process it is not hard to see that 
$$\Phi_f (\mathrm{cl}(\widetilde{b_{j-1}})) = \mathrm{cl}(\widetilde{b_j}) \hspace{2mm}
\mbox{for each } j \ge 2. $$
Thus $\Phi_f (\mathrm{cl}(\widetilde{b_{j-1}}))$ satisfies  $(*)$ and $(**)$ for each $j$.  
Lemma~\ref{lem_disk-twist-inv} tells us that 
$C(b_g)$ is  a trivial knot for $g \ge 1$ since so is $C(b_1)$. 
Thus $M_{C(b_g)} \cong S^3$, and 
$\mathfrak{t}(\widetilde{b_g}) \in D_g(S^3)$ for $g \ge 1$ by Theorem~\ref{thm:2-fold}.

The proof of Theorem~D in \cite{HK18} 
says that 
$\Gamma(\widetilde{b_g}) \in \mathrm{Mod}(D_{2g+2})$ is pseudo-Anosov for $g \ge 2$ 
and $\log \lambda(\widetilde{b_g}) \asymp \dfrac{1}{g}$ holds. 
Moreover the stable foliation of the pseudo-Anosov reprerentative for $\Gamma(\widetilde{b_g})$ 
satisfies the assumption of Lemma~\ref{lem_disk}, 
see the proof of Step 2 in \cite[Theorem~D]{HK18}. 
Thus 
$\mathfrak{t}(\widetilde{b_g}) $ is pseudo-Anosov with 
$\lambda(\mathfrak{t}(\widetilde{b_g})) = \lambda(\widetilde{b_g})$ by Lemma~\ref{lem_disk}, 
and  we obtain the desired claim 
$\log \lambda(\mathfrak{t}(\widetilde{b_g})) = \log \lambda(\widetilde{b_g}) \asymp \dfrac{1}{g}$. 
This completes the proof. 
\end{proof}

We end this paper with a question. 

\begin{ques}
Let $M$ be a closed $3$-manifold $M$ 
which is the $2$-fold branched cover of $S^3$ branched over some link. 
Then does it hold $\log \delta(D_g(M)) \asymp \dfrac{1}{g}$? 
\end{ques}


\begin{thebibliography}{99}

\bibitem{AY81}
P.~Arnoux and J-P.~Yoccoz, 
{\it Construction de diff\'eomorphismes pseudo-Anosov},
C. R. Acad. Sci. Paris S\'er. I Math. 292 
 (1981), no. 1, 75--78. 



\bibitem{BH71}
J.~Birman and H.~Hilden, 
{\it On mapping class groups of closed surfaces as cover spaces}, 
Advances in the theory of Riemann surfaces, 
Annals of Math Studies 66, Princeton University Press
(1971), 81-115. 

\bibitem{Brooks85} 
R.~Brooks, 
{\it On branched covers of $3$-manifolds which fiber over the circle}, 
J. Reine Angew. Math. 362 (1985), 87-101.

\bibitem{FM12} 
B.~Farb and D.~Margalit, 
A primer on mapping class groups, 
Princeton Mathematical Series 49, Princeton University Press, 
Princeton, NJ (2012). 


\bibitem{HK18}
S.~Hirose and E.~Kin, 
{\it A construction of pseudo-Anosov braids with small normalized entropies}, 
preprint (2018). 


\bibitem{Montesinos87}
J.~M.~Montesinos, 
{\it On $3$-manifolds having surface bundles as branched covers}, 
Proc. Amer. Math. Soc. 101 (1987), no. 3, 555-558. 

\bibitem{Penner91}
R.~C.~Penner, 
{\it Bounds on least dilatations}, 
Proceedings of the American Mathematical Society 113 (1991), 443-450. 

\bibitem{Sakuma81}
M.~Sakuma, 
{\it Surface bundles over $S^1$ which are $2$-fold branched cyclic covers of $S^3$}, 
Math. Sem. Notes, Kobe Univ., 9 (1981) 159-180. 

\bibitem{Thurston88}
W.~P.~Thurston, 
{\it On the geometry and dynamics of diffeomorphisms of surfaces},
  Bull. Amer. Math. Soc. (N.S.) 19 (1988), no. 2, 417--431. 


\bibitem{Thurston98} 
W.~Thurston, 
{\it Hyperbolic structures on $3$-manifolds II: Surface groups and 
$3$-manifolds which fiber over the circle}, preprint, 
arXiv:math/9801045

\end{thebibliography}
\end{document}